\renewcommand{\phi}{\varphi}
\newcommand{\Z}{\mathbb{Z}}
\newcommand{\F}{\mathbb{F}}
\renewcommand{\vec}[1]{{\bf #1}}
\newcommand{\diag}{\operatorname{diag}}
 \theoremstyle{plain}
\newtheorem{theorem}[equation]{Theorem}
\newtheorem{lemma}[equation]{Lemma}
\theoremstyle{remark}
\theoremstyle{definition}
\begin{document}
\title{Supercharacters, elliptic curves, and the sixth moment of Kloosterman sums}

\author{Stephan Ramon Garcia}
\address{Department of Mathematics, Pomona College, 610 N. College Ave., Claremont, CA 91711} 
\email{stephan.garcia@pomona.edu}
\urladdr{\url{http://pages.pomona.edu/~sg064747}}

\author{George Todd}
\address{Department of Mathematics, Union College, Bailey Hall 202, Schenectady, NY 12308}
\email{toddg@union.edu}

\begin{abstract}
We connect the sixth power moment of Kloosterman sums to elliptic curves.  This yields an elementary proof that $K_u$ with $p\nmid u$ are $O(p^{2/3})$.
\end{abstract}

\thanks{First author supported by a David L. Hirsch III and Susan H. Hirsch Research Initiation Grant and the Institute for Pure and Applied Mathematics (IPAM) Quantitative Linear Algebra program.}

\maketitle

\section{Introduction}
Let $e_p(x) = \exp(2\pi i x / p)$, in which $p$ is an odd prime.
A \emph{Kloosterman sum} is
\begin{equation*}
K(a,b) = \sum_{x =1}^{p-1} e_p(ax + bx^{-1}),
\end{equation*}
in which $x^{-1}$ is the inverse of $x$ modulo $p$.
Kloosterman sums are real and satisfy 
$K(a,b) = K(1,ab)$ if $p \nmid a$.  Consequently, we write
$K_u = K(1,u)$.
The celebrated \emph{Weil bound} asserts that
$|K(u)| \leq 2 \sqrt{p}$ for $p \nmid u$ \cite{Weil}.  

The first several \emph{power moments} 
\begin{equation*}
V_n(p) = \sum_{u=1}^{p-1} K_u^n
\end{equation*}
of the Kloosterman sums are
\begin{align*}
V_1(p) &= 1,
&V_2(p) &= p^2-p-1,\\
V_3(p) &= \left( \frac{p}{3} \right) p^2 + 2p + 1,
&V_4(p) &= 2p^3 - 3p^2 - 3p -1,
\end{align*}
in which $(\frac{\cdot}{p})$ denotes the Legendre symbol modulo $p$.
See \cite{CKS} for simple proofs of the preceding and \cite{Chavez} for
additional mixed-moment evaluations.
An expression for $V_5(p)$ was found by Livne \cite{Livne}
and by Peters, Top, and van der Vlugt \cite{Peters}:
\begin{equation*}\qquad\qquad
V_5(p) = \left( \frac{p}{3} \right)4p^3 + (a_p + 5)p^2 + 4p + 1,\qquad p> 5,
\end{equation*}
in which $|a_p| < 2p$ depends upon $p$;
see \cite[p.~1234]{XiYi} or \cite[p.~112]{ZhangHan} for details.  In the early 1930s,
H.~Sali\'e \cite{Salie} and H.~Davenport \cite{Davenport} proved that
$V_6(p) = O(p^4)$.
A precise evaluation of $V_6(p)$ was obtained in 2001 by 
Hulek, Spandaw, van Geemen, and van Straten \cite{Hulek}.  They showed that
\begin{equation}\label{eq:V6Old}\qquad\qquad
V_6(p) = 5p^4 - 10p^3 - (b_p + 9)p^2 - 5p - 1, \qquad p>7,
\end{equation}
in which $b_p$ is an integer with $|b_p| < 2p^{3/2}$ that is derived from the Dedekind eta function.
Consequently, 
\begin{equation*}
V_6(p) = 5p^4 + O(p^{7/2})
\end{equation*}
and hence $K_u = O(p^{2/3})$.
In 2010, Evans conjectured formulas for $V_7(p)$ and $V_8(p)$ \cite{Evans, Evans2}, which were ultimately proved by Yun \cite{Yun2} by attaching Galois representations to Kloosterman sums. It should be noted that the fifth through eighth moments can be related to Hecke eigenvalues;
see Section \ref{FutureWork}. Exact formulas for $V_n(p)$ for $n \geq 9$ appear difficult to obtain.

Our main result is a formula that relates the sixth power moment of Kloosterman
sums to elliptic curves.  
This particular connection appears novel for $p \geq 5$,
although for $p=2$ and $p=3$ some links between Kloosterman sums and elliptic
curves have been discovered \cite{Lisonek}.

\begin{theorem}\label{Theorem:Main}
For $p\geq 5$,
\begin{equation}\label{eq:V6}
V_6(p)
= 
4 p^4 -8p^3+ \left[4 \left( \frac{p}{3} \right)+2\right] p^2-5 p-1  + p^2 \sum_{\substack{k=2\\k\neq 9}}^{p-1} (a_p(E_k)+1)^2,
\end{equation}
in which $a_p(E_k)$ denotes the Frobenius trace of the elliptic curve
\begin{equation*}
E_k(\F_p) = \big\{ (x,y) \in \F_p^2\, : \,y^2 = 4k x^3 +  (k^2 - 6 k  -3) x^2 + 4x\big\}.
\end{equation*}
\end{theorem}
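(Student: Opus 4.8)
The plan is to expand $V_6(p)=\sum_{u=1}^{p-1} K_u^6$ combinatorially, identify the main term, and match the lower-order pieces to point counts on the curves $E_k$. First I would write $K_u^6$ as a sum over six-tuples $(x_1,\dots,x_6)\in(\F_p^\times)^6$ of $e_p\big(\sum_i(x_i+u x_i^{-1})\big)$ and sum over $u$ from $1$ to $p-1$. The inner sum over $u\in\F_p$ is $p$ if $\sum_i x_i^{-1}=0$ and $0$ otherwise, with a correction of $-1$ coming from the missing term $u=0$; thus up to an explicit lower-order adjustment, $V_6(p)$ equals $p$ times the number of tuples with $\sum_{i=1}^6 x_i^{-1}=0$ weighted by $e_p(\sum_i x_i)$, minus the count of tuples with $\sum x_i^{-1}=0$ outright. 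Substituting $x_i\mapsto x_i^{-1}$ shows this is really about the affine variety $\sum x_i = 0$ with the additive character on $\sum x_i^{-1}$. The real work is to parametrize the solution set.

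Next I would stratify by how the six variables cluster. A Kloosterman sum $K_u$ is also (essentially) a point count on $xy=u$, so $K_u^6$ counts points on a product; the diagonal-type degeneracies (variables that are equal or that pair off $x_ix_j=u$) contribute the polynomial main term $4p^4-8p^3+[4(\frac{p}{3})+2]p^2-5p-1$, and here I expect to reuse the bookkeeping behind the known evaluations of $V_2,V_3,V_4$ in \cite{CKS} together with the cubic Gauss-sum contribution responsible for the $(\frac{p}{3})$. The genuinely new contribution comes from the generic stratum, which I would cut by a single parameter $k$ running over $\F_p\setminus\{0,1,9\}$ (the excluded values being where the relevant conic or cubic degenerates). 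On each slice the fibre is a curve, and after clearing denominators and completing the square in $y$ one should land exactly on $y^2 = 4kx^3+(k^2-6k-3)x^2+4x$, so that the number of $\F_p$-points on the generic stratum over a given $k$ is $p + a_p(E_k)$ plus controlled boundary terms. Squaring (because the six variables split into two triples, each triple contributing one such curve point over the common parameter $k$) produces the $(a_p(E_k)+1)^2$ summand with its $p^2$ prefactor.

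Finally I would assemble the pieces: collect the polynomial contributions from the degenerate strata into the closed-form quadratic-in-$p$ expression, verify the $k=9$ omission and the range $k\ge 2$ correspond precisely to the singular fibres and to repackaging the $k=1$ contribution into the main term, and confirm the total matches \eqref{eq:V6}. As a consistency check I would compare with the Hulek–Spandaw–van Geemen–van Straten formula \eqref{eq:V6Old}, which forces $\sum_{k}(a_p(E_k)+1)^2$ to have a prescribed size and recovers $b_p$; this also re-proves $V_6(p)=5p^4+O(p^{7/2})$ via the Weil bound $|a_p(E_k)|\le 2\sqrt p$ applied termwise, and hence $K_u=O(p^{2/3})$. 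The main obstacle is the change of variables in the generic stratum: getting the defining equation of the fibre into Weierstrass form with the exact coefficients $4k$, $k^2-6k-3$, $4$, tracking every degenerate locus that must be subtracted, and making sure the projection to the parameter $k$ is generically two-to-one onto the curve in the way that yields a clean square rather than a double sum.
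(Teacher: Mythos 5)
Your opening move is fine: orthogonality in $u$ gives $V_6(p) = p\,S - 1$ with $S=\sum e_p(x_1+\cdots+x_6)$ taken over sextuples in $(\F_p^{\times})^6$ satisfying $x_1^{-1}+\cdots+x_6^{-1}=0$ (small slip: the correction is $-\sum_{\vec{x}} e_p(\sum x_i)=-1$ over \emph{all} tuples, not the unweighted count of tuples on the variety). Everything after that, however, is asserted rather than carried out, and the assertions skip exactly the content of the theorem. You claim the generic stratum is cut by a single parameter $k$ into fibres which, after a change of variables, become $y^2=4kx^3+(k^2-6k-3)x^2+4x$, and that the two triples each contribute one curve point so the contribution squares. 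None of this is constructed: you never say what $k$ is as a function of $(x_1,\dots,x_6)$, why the partition of six variables into two triples is canonical (there are ten such partitions), how the additive character $e_p(\sum x_i)$ weighting your count turns into the quadratic character implicit in $|E_k(\F_p)|$, or where the coefficients $4k$, $k^2-6k-3$, $4$ come from — you yourself flag this last item as ``the main obstacle.'' Note also that your degenerate/generic split is not a main-term/error-term split: $p^2\sum_k(a_p(E_k)+1)^2$ is itself of order $p^4$, the same order as the polynomial part, so the bookkeeping you defer is precisely where the theorem lives. Finally, the relevant non-polynomial object in the sextuple picture is (after the character sum) a threefold — this is where the $b_p$ of \eqref{eq:V6Old} comes from — and slicing it into the specific pencil $E_k$ is the whole point; a direct stratification of the sextuple variety is not known to produce it.

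For contrast, the paper gets the curves by a concrete two-step reduction that your plan would need substitutes for. First, the supercharacter machinery packages the sextuple count into the matrix identity $T^4=UD^4U$, where $[T]_{i,j}=1+\left(\frac{f_j(i)}{p}\right)$ with $f_j(x)=x^2-2(j+1)x+(j-1)^2$; computing $[T^4]_{1,1}=\sum_k[T^2]_{1,k}^2$ (Lemma \ref{Lemma:Entries}) reduces everything to the character sums $\epsilon_k=\sum_x\left(\frac{f_1(x)f_k(x)}{p}\right)$ — this is where the parameter $k$ and the ``square of a common-parameter contribution'' actually come from, as an orbit index rather than a geometric fibration of the sextuple variety. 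Second, a quartic-to-cubic birational transformation (Lemma \ref{Lemma:Transform}, after Williams) converts $\epsilon_k$ into $-1-a_p(E_k)$ and produces the exact Weierstrass coefficients (Lemma \ref{Lemma:Bound}), with the excluded values $k=1,9$ arising because $f_1$ and $f_k$ share a root exactly then. Until you supply analogues of these two steps, the proposal is a plausible outline of the shape of the answer, not a proof.
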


The restriction $k\neq 1,9$ is natural since
these yield the non-elliptic curves
$y^2 = 4x(x-1)^2$ and $y^2 = 4x(3x+1)^2$,
respectively.  For $p=5,7$, we interpret these restrictions modulo $p$.
For example, if $p=7$, then the terms corresponding to $k=1$ and $k=2$ are omitted in \eqref{eq:V6}.
For $k\neq 1,9$, the number of points on $E_k(\F_p)$, including the point at infinity, is
\begin{equation*}
|E_k(\F_p)| = p+1 - a_p(E_k),
\end{equation*}
in which the Frobenius trace $a_p(E_k)$ satisfies \emph{Hasse's inequality} \cite{Hasse}
\begin{equation*}
|a_p(E_k)| \leq 2\sqrt{p}.
\end{equation*}
An elementary inductive proof of this
was found by Y.~Manin \cite{Manin}.
It is simple enough that it appeared in the American Mathematical
Monthly in 2008 \cite{ChahalMonthly}.

To prove Theorem \ref{Theorem:Main},
we employ basic supercharacter theory to 
realize Kloosterman sums as eigenvalues of a certain matrix whose entries encode combinatorial information about a
certain group action.  
This completely elementary, linear-algebraic perspective provides a convenient method for keeping track
of various expressions that arise throughout our computations.  
This approach was first undertaken to study Ramanujan sums \cite{RSS};
see also \cite{GHM}.

As a consequence of Theorem \ref{Theorem:Main}, we obtain an elementary proof that
\begin{equation}\label{eq:BarrierBound}
|K_u| \leq 1.43 p^{2/3}
\end{equation}
whenever $p \nmid u$.
In particular, this breaks the ``$O(p^{3/4})$ barrier,'' which folklore suggested
cannot be passed without deep techniques or difficult point-counting arguments.
To obtain \eqref{eq:BarrierBound}, use Hasse's inequality in \eqref{eq:V6} and compute
\begin{align*}
V_6(p)
&\leq 4 p^4 -8p^3+ \left[4 \left( \frac{p}{3} \right)+2\right] p^2-5 p-1  + p^2 (p-3)
(4p +4\sqrt{p}+1)\\
&= 8 p^4 +4 p^{7/2}-19 p^3 -12 p^{5/2}+ \left[4  \left( \frac{p}{3} \right)-1\right] p^2-5 p-1 \\
&\leq 8 p^4 +4 p^{7/2}-19 p^3 -12 p^{5/2}+ 3 p^2-5 p-1 \\
&\leq 8.5p^4
\end{align*}
for $p\geq 5$.  Taking sixth roots and verifying the cases $p=2,3$
yields \eqref{eq:BarrierBound}.


\smallskip\noindent\textbf{Acknowledgments}:  We thank Terence Tao for many helpful suggestions.

\section{Kloosterman sums as supercharacters}
The theory of supercharacters was introduced in 2008 by P.~Diaconis and I.M.~Isaacs \cite{Diaconis},
building upon previous work of C.~Andr\'e \cite{Andre1} on the representation theory of unipotent matrix
groups over finite fields.  We are concerned only with the special case in which the underlying group is abelian,
for which the details are much simpler.
A variety of exponential sums that are relevant to the theory of numbers can be realized
as supercharacters on abelian groups.    The following setup is from \cite{SESUP}.

Let $\Gamma$ be a subgroup of $GL_d( \Z/n\Z)$ that is closed under the transpose operation and
let $X_1,X_2,\ldots,X_N$ denote the orbits in $G=(\Z/n\Z)^d$ under the action of $\Gamma$.  The functions
\begin{equation}\label{eq:Supercharacter}
\sigma_i(\vec{y}) = \sum_{\vec{x} \in X_i} e \left( \frac{ \vec{x} \cdot \vec{y} } {n} \right),
\end{equation}
in which $\vec{x}\cdot \vec{y}$ denotes the formal dot product of two elements of $(\Z/n\Z)^d$
and $e(x) = \exp(2\pi i x)$,
are \emph{supercharacters} on $(\Z/n\Z)^d$ and the sets $X_i$
are \emph{superclasses}.  One can show that supercharacters are constant
on superclasses, so we may write $\sigma_i(X_j)$ without
confusion.  The $N \times N$ matrix
\begin{equation}\label{eq:USCT}
U = \frac{1}{\sqrt{n^d}} \left[  \frac{   \sigma_i(X_j) \sqrt{  |X_j| }}{ \sqrt{|X_i|}} \right]_{i,j=1}^N
\end{equation}
is complex symmetric (i.e., $U = U^{\mathsf{T}}$) and unitary \cite[Lem.~1]{SESUP}, \cite[Sect.~2.1]{RSS}.
It represents, with respect to a particular
orthonormal basis, the restriction of the discrete Fourier transform (DFT) to the subspace of $L^2(G)$
that consists of functions that are constant on each $\Gamma$-orbit in $G$.

The following lemma identifies the set of matrices that are diagonalized
by the unitary matrix \eqref{eq:USCT} as the span of a certain family of normal matrices that contain 
combinatorial information about the group action.  The proof is completely elementary and is 
similar to the corresponding result from classical character theory
\cite[Section 33]{CR62}.  In fact, the matrices \eqref{eq:U}, \eqref{eq:D}, \eqref{eq:T} below and their properties
can be obtained with classical character theory in a more contrived, tedious, and long-winded manner \cite[Lem.~3.1]{CKS}.
A more general version of this lemma, in which $G$ need not be abelian, is \cite[Thm.~4.2]{RSS}.
The simple version that we present below is \cite[Thm.~1]{SESUP}.

\begin{lemma}\label{Lemma:SESUP}
Let $\Gamma = \Gamma^{\mathsf{T}}$ be a subgroup of $GL_d(\Z/n\Z)$, let $\{X_1,X_2,\ldots,X_N\}$
denote the set of $\Gamma$-orbits in $G = (\Z/n\Z)^d$ induced by the action of $\Gamma$, 
and let $\sigma_1,\sigma_2,\ldots,\sigma_N$
denote the corresponding supercharacters \eqref{eq:Supercharacter}.
For each fixed $\vec{z}$ in $X_k$, let $c_{i,j,k}$ denote the number of solutions $(\vec{x}_i,\vec{y}_j) \in X_i \times X_j$ to the equation $\vec{x}+\vec{y} = \vec{z}$.
\begin{enumerate}\addtolength{\itemsep}{0.5\baselineskip}
\item $c_{i,j,k}$ is independent of the representative $\vec{z}$ in $X_k$ that is chosen.
\item The identity
\begin{equation*}
\sigma_i(X_{\ell}) \sigma_j(X_{\ell}) = \sum_{k=1}^N c_{i,j,k} \sigma_k(X_{\ell})
\end{equation*}
holds for $1\leq i,j,k,\ell \leq N$.
\item The matrices $T_1,T_2,\ldots,T_N$, whose entries are given by
\begin{equation*}
[T_i]_{j,k} = \frac{ c_{i,j,k} \sqrt{ |X_k| } }{ \sqrt{ |X_j|} },
\end{equation*}
each satisfy 
\begin{equation*}
T_i U = U D_i,
\end{equation*}
in which 
\begin{equation*}
D_i = \operatorname{diag}\big(\sigma_i(X_1), \sigma_i(X_2),\ldots, \sigma_i(X_N) \big).
\end{equation*}
In particular, the $T_i$ are simultaneously unitarily diagonalizable.

\item Each $T_i$ is a normal matrix (i.e., $T_i^*T_i = T_i T_i^*$) and
the set $\{T_1,T_2,\ldots,T_N\}$ forms a basis for the commutative algebra of all $N \times N$ complex matrices 
$T$ such that $U^*TU$ is diagonal.
\end{enumerate}
  \end{lemma}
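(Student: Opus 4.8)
The plan is to prove each of the four assertions of Lemma~\ref{Lemma:SESUP} in turn, using only the defining identity \eqref{eq:Supercharacter}, the unitarity and symmetry of the matrix $U$ in \eqref{eq:USCT}, and elementary orbit-counting. For (a), fix $\vec{z}, \vec{z}'$ in the same orbit $X_k$, so $\vec{z}' = \gamma \vec{z}$ for some $\gamma \in \Gamma$. Since $\Gamma$ acts on $G$ by automorphisms, the map $(\vec{x},\vec{y}) \mapsto (\gamma \vec{x}, \gamma \vec{y})$ is a bijection from $\{(\vec{x},\vec{y}) \in X_i \times X_j : \vec{x}+\vec{y} = \vec{z}\}$ onto the corresponding solution set for $\vec{z}'$, because each $X_i$ is $\Gamma$-invariant and $\gamma(\vec{x}+\vec{y}) = \gamma\vec{x} + \gamma\vec{y}$. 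Hence $c_{i,j,k}$ is well defined.

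For (b), I would compute $\sigma_i(X_\ell)\,\sigma_j(X_\ell)$ directly from \eqref{eq:Supercharacter}. Fixing a representative $\vec{z} \in X_\ell$ and using that supercharacters are constant on superclasses,
\begin{equation*}
\sigma_i(X_\ell)\,\sigma_j(X_\ell) = \sum_{\vec{x} \in X_i}\sum_{\vec{y} \in X_j} e\!\left(\frac{(\vec{x}+\vec{y})\cdot\vec{z}}{n}\right).
\end{equation*}
Now group the pairs $(\vec{x},\vec{y})$ according to the value $\vec{w} = \vec{x}+\vec{y}$. For each orbit $X_k$ and each fixed $\vec{w} \in X_k$ the number of contributing pairs is $c_{i,j,k}$ by part (a); summing over $\vec{w} \in X_k$ and then over $k$, and recognizing $\sum_{\vec{w}\in X_k} e((\vec{w}\cdot\vec{z})/n) = \sigma_k(X_\ell)$, gives the stated structure-constant identity. (One small point to check: the set of all possible sums $\vec{x}+\vec{y}$ need not be a union of orbits in general, but for those $\vec{w}$ not of the form $\vec{x}+\vec{y}$ with $\vec{x}\in X_i,\vec{y}\in X_j$ the count $c_{i,j,k}$ is simply zero, so the identity still holds term by term.)

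For (c), the identity of part (b) is exactly the statement that the vector $(\sigma_i(X_1),\ldots,\sigma_i(X_N))$, suitably weighted, is a common eigenvector. Concretely, I would show $(T_i U)_{j,\ell} = (U D_i)_{j,\ell}$ by expanding both sides: the left side is $\sum_k [T_i]_{j,k} U_{k,\ell} = \frac{1}{\sqrt{n^d}}\sum_k \frac{c_{i,j,k}\sqrt{|X_k|}}{\sqrt{|X_j|}} \cdot \frac{\sigma_k(X_\ell)\sqrt{|X_\ell|}}{\sqrt{|X_k|}}$, where the $\sqrt{|X_k|}$ factors cancel and part (b) collapses $\sum_k c_{i,j,k}\sigma_k(X_\ell)$ to $\sigma_i(X_\ell)\sigma_j(X_\ell)$; the right side is $\frac{1}{\sqrt{n^d}} \cdot \frac{\sigma_j(X_\ell)\sqrt{|X_\ell|}}{\sqrt{|X_j|}}\sigma_i(X_\ell)$, which matches. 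Since $U$ is unitary, this rearranges to $U^* T_i U = D_i$, so each $T_i$ is diagonalized by the single fixed unitary $U$; simultaneous diagonalizability is immediate. For (d), normality of $T_i$ follows because $U^*T_iU = D_i$ is diagonal, hence normal, and unitary conjugation preserves normality. That each $T$ with $U^*TU$ diagonal lies in $\operatorname{span}\{T_1,\ldots,T_N\}$ follows by a dimension count: the space of such $T$ is the image under $T \mapsto UTU^*$ of the $N$-dimensional space of diagonal matrices, so it has dimension $N$; it therefore suffices to show $T_1,\ldots,T_N$ are linearly independent, which I would deduce from the fact that the matrix $[\sigma_i(X_j)]$ is invertible — indeed $U$ is unitary and the diagonal matrices $D_i$ it produces are precisely the rows of this character table, so a nontrivial linear relation among the $T_i$ would force one among the $D_i$, contradicting invertibility. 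Commutativity and the algebra structure then follow since all the $T_i$ are simultaneously diagonalized.

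I expect the only genuine subtlety to be the bookkeeping in part (b) — making sure the passage from a double sum over $X_i \times X_j$ to a sum over orbits $X_k$ is airtight, including the degenerate case where some sums $\vec{x}+\vec{y}$ land outside any "reachable" orbit — and, in part (d), pinning down the linear independence of the $T_i$ cleanly rather than hand-waving it; everything else is routine linear algebra once part (b) is in place.
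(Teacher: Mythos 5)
Your proposal is correct; note that the paper does not prove this lemma at all but simply cites it to \cite{SESUP} (Thm.~1) and the more general \cite{RSS} (Thm.~4.2), describing the argument as ``completely elementary.'' Your orbit-counting proof of the well-definedness of $c_{i,j,k}$, the structure-constant identity, the entrywise verification of $T_iU=UD_i$, and the dimension-count plus invertibility of the supercharacter table for part (d) constitute exactly the standard argument given in those references, so there is nothing to flag.
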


Let $p$ be an odd prime.  Then the action of the diagonal matrix group
\begin{equation*}
\Gamma = \{ \operatorname{diag}(u,u^{-1}) : u \in \F_p^{\times} \}
\end{equation*}
on the additive group $G = \F_p^2$ induces a supercharacter theory 
that is related to Kloosterman sums.
There are $N = p+2$ superclasses (that is, $\Gamma$-orbits in $G$):
\begin{equation*}
\begin{array}{rcl}
X_1 &=& \big\{ (x,x^{-1}) : x \in \F_p^{\times}\big\},\\[3pt]
X_2 &=& \big\{ (x,2x^{-1}) : x \in \F_p^{\times}\big\},\\[3pt]
&\vdots& \\
X_{p-1} &=& \big\{ (x,(p-1)x^{-1}) : x \in \F_p^{\times}\big\},\\[3pt]
X_p &=& \big\{(0,u) : u \in \F_p^{\times} \big\} ,\\[3pt]
X_{p+1} &=& \big\{(u,0): u \in \F_p^{\times} \big\} ,\\[3pt]
X_{p+2} &=& \big\{(0,0)\big\}.
\end{array}
\end{equation*}
If $1 \leq i,j \leq p-1$, then we select the representative $\vec{y} = (1,j) \in X_j$ and compute:
\begin{align*}
\sigma_i(X_j)
&= \sum_{\vec{x} \in X_i} e_p( \vec{x} \cdot \vec{y})
= \sum_{u =1}^{p-1} e_p\big ((u,iu^{-1}) \cdot (1,j) \big) \\
&= \sum_{u=1}^{p-1} e_p(u + iju^{-1}) = K_{ij}.
\end{align*}
A few more computations complete the supercharacter table (Table \ref{Table:SCT}).

\begin{table}[b]
\begin{equation*}\small
\begin{array}{|c|cccc|cc|c|}
\hline
(\Z/p\Z)^2 & X_1 & X_2 & \cdots & X_{p-1} & X_p & X_{p+1} & X_{p+2} \\[2pt] 
\Gamma & (1,1) & (1,2) & \cdots & (1,p-1) & (0,1) & (1,0) & (0,0) \\[2pt]
\# & p-1 & p-1 & \cdots & p-1 & p-1 & p-1 & 1  \\[2pt]
\hline
\sigma_1 & K_1 & K_2 & \cdots & K_{p-1} & -1 & -1 & p-1 \\[2pt]
\sigma_2 & K_2 & K_4 & \cdots & K_{2(p-1)} & -1 & -1 & p-1 \\[2pt]
\vdots & \vdots & \vdots & \ddots & \vdots & \vdots & \vdots & \vdots \\[2pt]
\sigma_{p-1} & K_{p-1} & K_{2(p-1)} & \cdots & K_{(p-1)^2} & -1 & -1 & p-1 \\[2pt]
\hline
\sigma_p & -1 & -1 & \cdots & -1 & p-1 & -1 & p-1 \\[2pt]
\sigma_{p+1} & -1 & -1 & \cdots & -1 & -1 & p-1 & p-1 \\[2pt]
\hline
\sigma_{p+2} & 1 & 1 & \cdots & 1 & 1 & 1 & 1 \\[2pt]
\hline
\end{array}
\end{equation*}
\caption{Supercharacter table for the action of $\Gamma =\{ \operatorname{diag}(u,u^{-1}) : u \in \F_p^{\times} \}$
on $G = \F_p^2$.}
\label{Table:SCT}
\end{table}

The formula \eqref{eq:USCT} provides 
the $(p+2) \times (p+2)$ real-symmetric unitary matrix
\begin{equation}\label{eq:U}\small
U:=
\frac{1}{p}
\left[
\begin{array}{cccc|cc|c}
K_1&K_2&\cdots&K_{p-1}&-1&-1&\sqrt{p-1}\\[2pt]
K_2&K_4&\cdots&K_{2(p-1)}&-1&-1&\sqrt{p-1}\\[2pt]
\vdots&\vdots&\ddots&\vdots&\vdots&\vdots&\vdots\\[2pt]
K_{p-1}&K_{2(p-1)}&\cdots&K_{(p-1)^2}&-1&-1&\sqrt{p-1}\\[2pt]
\hline
-1&-1&\cdots&-1& p-1&-1&\sqrt{p-1}\\[2pt]
-1&-1&\cdots&-1&-1& p-1&\sqrt{p-1}\\[2pt]
\hline
\sqrt{p-1}&\sqrt{p-1}&\cdots &\sqrt{p-1}&\sqrt{p-1}&\sqrt{p-1}&1\\[2pt]
\end{array}
\right];
\end{equation}
this is \cite[eq.~3.13]{CKS}.
Define $D = D_1$ and $T = T_1$ as in  Lemma \ref{Lemma:SESUP}.
These are
\begin{equation}\label{eq:D}
D = \diag(K_{1},K_{2},\ldots,K_{(p-1)},-1,-1,p-1)  
\end{equation}
and
\begin{equation}\label{eq:T}
T =    
\left[
\begin{array}{cccc|cc|c}
t_{1,1}&t_{1,2}&\cdots&t_{1,p-1}&0&0&\sqrt{p-1}\\
t_{2,1}&t_{2,2}&\cdots&t_{2,p-1}&1&1&0\\
\vdots&\vdots&\ddots&\vdots&\vdots&\vdots&\vdots\\
t_{p-1,1}&t_{p-1,2}&\cdots&t_{p-1,p-1}&1&1&0\\
\hline
0&1&\cdots&1&0&1&0\\
0&1&\cdots&1&1&0&0\\
\hline
\sqrt{p-1}&0&\cdots &0&0&0&0\\
\end{array}
\right],
\end{equation}
in which
\begin{equation}\label{eq-cijk}
t_{i,j} = 1 + \left( \frac{f_j(i)}{p} \right)
\end{equation}
and
\begin{equation}\label{eq:Beta}
f_j(x) = x^2 -2(j+1)x + (j-1)^2.
\end{equation}
The symmetry $f_j(i) = f_i(j)$ will be important later.
The matrix $T$ is \cite[eq.~3.12]{CKS} and it satisfies
$T = UDU$
by Lemma \ref{Lemma:SESUP} since $U^* = U$.
These matrices and the diagonalization above can also be derived, with more effort, using
classical character theory \cite{CKS}.

Before proceeding, a brief explanation of \eqref{eq:Beta} is in order.  
Let $1 \leq i,j\leq p-1$.
In the notation of Lemma \ref{Lemma:SESUP},
$t_{i,j} = c_{1,i,j}$ is the number of solutions to $\vec{x} + \vec{y} = \vec{z}$, in which 
$\vec{z} = (1,j) \in X_j$ is fixed, $\vec{x} = (x,x^{-1}) \in X_1$ and $\vec{y} = (y,iy^{-1})\in X_i$.  This yields
the system
\begin{equation*}
(x,x^{-1}) + (y,iy^{-1}) = (1,j).
\end{equation*}
Since $x=1$ implies $y=0$, we may assume that $x \neq 1$.
The first equation $x+y=1$ suggests the substitution $y = 1-x$.  The second equation then yields
\begin{equation*}
jx^2 + (i-j-1)x + 1 = 0,
\end{equation*}
the discriminant of which is $f_j(i)$.  This establishes \eqref{eq:Beta}.

\section{Proof of Theorem \ref{Theorem:Main}}\label{Section:Proof}
We prove the desired identity \eqref{eq:V6} by computing
$[T^4]_{1,1} = [UD^4 U]_{1,1}$ in two different ways.
The evaluation of $[UD^4 U]_{1,1}$ is relatively simple and involves
$V_6(p)$; we save this for later.  To compute $[T^4]_{1,1}$ requires more work.
Some of the expressions that arise involve
Frobenius traces of certain elliptic curves over $\F_p$.

\begin{lemma}
For $k=1,2,\ldots,p-1$,
\begin{equation}\label{eq:LegendreSum}
\sum_{x=0}^{p-1} \left( \frac{ f_k(x)}{p} \right) = - 1  
\qquad \text{and} \qquad
\sum_{x=0}^{p-1} \left( \frac{ f_k(x)}{p} \right)^2 
= p-1-\left( \frac{k}{p}\right)  .
\end{equation}
\end{lemma}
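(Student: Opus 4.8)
The plan is to reduce both identities to two elementary facts about $\F_p$: that $\sum_{x=0}^{p-1}\left(\frac{x^2+c}{p}\right) = -1$ whenever $c \not\equiv 0 \pmod p$, and that the equation $u^2 = c$ has exactly $1 + \left(\frac{c}{p}\right)$ solutions in $\F_p$. The first step is simply to complete the square: since $(k-1)^2 - (k+1)^2 = -4k$,
\[
f_k(x) = x^2 - 2(k+1)x + (k-1)^2 = \big(x - (k+1)\big)^2 - 4k ,
\]
and $4k \not\equiv 0 \pmod p$ because $1 \le k \le p-1$ and $p$ is odd.

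For the first identity, the substitution $u = x - (k+1)$ is a bijection of $\F_p$ and turns the sum into $\sum_{u=0}^{p-1}\left(\frac{u^2 - 4k}{p}\right)$, which is $-1$ by the first fact applied with $c = -4k \ne 0$. For the second identity, note that $\left(\frac{t}{p}\right)^2$ is $1$ when $p \nmid t$ and $0$ when $p \mid t$, so
\[
\sum_{x=0}^{p-1}\left(\frac{f_k(x)}{p}\right)^2 = p - \#\{x \in \F_p : f_k(x) = 0\} .
\]
By the completed-square form, $f_k(x) = 0$ if and only if $\big(x-(k+1)\big)^2 = 4k$, which has exactly $1 + \left(\frac{4k}{p}\right) = 1 + \left(\frac{k}{p}\right)$ solutions, using that $4$ is a square. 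Hence the sum equals $p - 1 - \left(\frac{k}{p}\right)$.

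I do not expect any real obstacle; the computation is short once the square is completed. The only point needing a line of care is that $4k$ is a nonzero element of $\F_p$ --- equivalently $p \nmid k$, which is exactly the hypothesis $1 \le k \le p-1$ --- so that the degenerate cases ($c=0$ in the character-sum fact, or the single root of $u^2 = 0$) never intrude. Should a self-contained argument be wanted, the identity $\sum_{u}\left(\frac{u^2+c}{p}\right) = -1$ for $c \ne 0$ follows by counting pairs $(u,v)$ with $v^2 = u^2 + c$: the change of variables $(s,t) = (v-u,\,v+u)$, valid since $p$ is odd, converts this to $st = c$, which has exactly $p-1$ solutions, and subtracting the trivial contribution gives $-1$.
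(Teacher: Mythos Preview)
Your proof is correct and follows essentially the same approach as the paper: both complete the square via $f_k(x+k+1)=x^2-4k$ (equivalently your substitution $u=x-(k+1)$), invoke the standard identity $\sum_{u}\left(\frac{u^2+c}{p}\right)=-1$ for $c\not\equiv 0$, and count the roots of $u^2=4k$ as $1+\left(\frac{k}{p}\right)$. The only difference is that the paper cites Ireland--Rosen for the character-sum fact, whereas you sketch the self-contained $(s,t)=(v-u,v+u)$ argument.
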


\begin{proof}
Since $f_k(x+k+1) = x^2 - 4k$ and $p \nmid 4k$, the first equation in \eqref{eq:LegendreSum}
follows from \cite[Ex.~8, p.~63]{Ireland}.  The number of solutions to $x^2 - 4k \equiv 0 \pmod{p}$ 
is $1+ (k/p)$, from which the second equation in \eqref{eq:LegendreSum} follows.
\end{proof}

Let
\begin{equation}\label{eq:Epsilon}
\epsilon_k = \sum_{x=0}^{p-1} \left( \frac{f_1(x) f_k(x)}{p} \right).
\end{equation}
In what follows, the Legendre symbol $(p/3)$ occurs frequently.  Consequently,
we adopt the shorthand $\ell_p = (p/3)$.  Since $p\geq 5$, it follows that $\ell_p^2 = 1$.

The quadratic formula confirms that
\begin{equation*}
f_1(x) = x^2 - 4x \qquad \text{and} \qquad f_k(x) = x^2 - 2(k+1)x + (k-1)^2
\end{equation*}
share a common root if and only if $k = 1$ or $k= 9$.  This causes some minor complications
later on when we attempt to write \eqref{eq:Epsilon} in terms of Frobenius traces.
We therefore evaluate $\epsilon_1$ and $\epsilon_9$ explicitly here.

\begin{lemma}\label{Lemma:19}
$\epsilon_1 = p-2$ and $\epsilon_9 = -1-\ell_p$.
\end{lemma}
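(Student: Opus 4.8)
The plan is to evaluate the two sums $\epsilon_1$ and $\epsilon_9$ directly from the definition \eqref{eq:Epsilon}, exploiting the fact that in each case the quadratics $f_1$ and $f_k$ share a root, so that the product $f_1(x)f_k(x)$ simplifies. For $\epsilon_1$ we have $f_1(x)^2 = (x^2-4x)^2 = \big(x(x-4)\big)^2$, which is a perfect square. Hence $\big(\tfrac{f_1(x)^2}{p}\big) = 1$ for every $x$ with $p\nmid x(x-4)$, and the value is $0$ for the two values $x\equiv 0$ and $x\equiv 4$ (valid since $p\geq 5$, so these are distinct). Therefore $\epsilon_1 = (p-2)\cdot 1 = p-2$.

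For $\epsilon_9$, first I would compute the common root. Since $f_1(x)=x(x-4)$ and $f_9(x) = x^2 - 20x + 64 = (x-4)(x-16)$, the shared factor is $(x-4)$, and so
\begin{equation*}
f_1(x) f_9(x) = x(x-4)^2(x-16).
\end{equation*}
Thus $\big(\tfrac{f_1(x)f_9(x)}{p}\big) = \big(\tfrac{x(x-16)}{p}\big)$ whenever $p\nmid(x-4)$, and it is $0$ when $x\equiv 4$. Consequently
\begin{equation*}
\epsilon_9 = \sum_{x=0}^{p-1}\left(\frac{x(x-16)}{p}\right) - \left(\frac{4\cdot(4-16)}{p}\right)
= \sum_{x=0}^{p-1}\left(\frac{x(x-16)}{p}\right) - \left(\frac{-48}{p}\right).
\end{equation*}
The sum $\sum_x \big(\tfrac{x(x-16)}{p}\big)$ is a standard Jacobi-style evaluation: for any $a\not\equiv 0$, $\sum_x\big(\tfrac{x(x-a)}{p}\big) = -1$ (complete the square, $x(x-a) = (x-a/2)^2 - a^2/4$, reduce to the known sum $\sum_t \big(\tfrac{t^2-c}{p}\big) = -1$ for $c\neq 0$, which is \cite[Ex.~8, p.~63]{Ireland} as already used in the previous lemma). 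So $\epsilon_9 = -1 - \big(\tfrac{-48}{p}\big)$, and it remains to identify $\big(\tfrac{-48}{p}\big)$ with $\ell_p = \big(\tfrac{p}{3}\big)$. Writing $-48 = -16\cdot 3$, we get $\big(\tfrac{-48}{p}\big) = \big(\tfrac{-3}{p}\big)$, and $\big(\tfrac{-3}{p}\big) = \big(\tfrac{p}{3}\big)$ is the classical evaluation (quadratic reciprocity plus the supplement, or Eisenstein's observation that $-3$ is a QR mod $p$ iff $p\equiv 1\pmod 3$). This gives $\epsilon_9 = -1 - \ell_p$, as claimed.

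The only mild obstacle is bookkeeping around small primes: one must confirm that the exceptional values $0,4,16$ are genuinely the relevant roots modulo $p$ and check whether any of them coincide when $p=5$ or $p=7$. For $p=5$: $16\equiv 1$, $4\equiv 4$, so $0,4,1$ are distinct and the count $p-2$ of nonzero terms in $\epsilon_1$ still stands; for $p=7$: $16\equiv 2$, $4\equiv 4$, again distinct. Since $f_1$ and $f_9$ share exactly one root (namely $x\equiv 4$) for all $p\geq 5$, the coincidence count never degrades, and no further case analysis is needed beyond noting $\big(\tfrac{-48}{p}\big)$ is well-defined because $p\geq 5$. I expect the verification of $\big(\tfrac{-3}{p}\big)=\ell_p$ to be quoted rather than reproved.
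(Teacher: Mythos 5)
Your proposal is correct and follows essentially the same route as the paper: for $\epsilon_1$ you use that $f_1(x)^2$ is a nonzero square except at the two roots $x\equiv 0,4$, and for $\epsilon_9$ you factor out the shared root $x\equiv 4$, correct by the single term $\left(\frac{-48}{p}\right)$, complete the square to evaluate $\sum_x\left(\frac{x(x-16)}{p}\right)=-1$, and identify $\left(\frac{-3}{p}\right)=\ell_p$. This matches the paper's proof step for step, so no further comment is needed.
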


\begin{proof}
Since $f_1(x) = x(x-4)$ has two distinct roots modulo $p$,
\begin{equation*}
\epsilon_1 = \sum_{x=0}^{p-1} \left( \frac{f_1(x)^2}{p} \right)  = \sum_{x=0}^{p-1} \left( \frac{f_1(x)}{p} \right)^2 = p-2.
\end{equation*}
Since $f_1$ and $f_k$ share the common factor $x-4$, \eqref{eq:LegendreSum} ensures that
\begin{align*}
\epsilon_9 &= \sum_{x=0}^{p-1} \left( \frac{f_1(x)}{p} \right) \left( \frac{f_9(x)}{p} \right) = \sum_{x=0}^{p-1} \left( \frac{x(x-4)}{p} \right) \left( \frac{(x-4)(x-16)}{p} \right) \\
&= \sum_{x=0}^{p-1} \left( \frac{x(x-16)}{p} \right) - \left(\frac{-48}{p} \right) = \sum_{x=0}^{p-1} \left( \frac{x^2 - 16x + 64 - 64}{p} \right) - \left(\frac{p}{3}\right) \\
&= \sum_{x=0}^{p-1} \left( \frac{(x - 8)^2 - 64}{p} \right)  - \left(\frac{p}{3}\right)
= -1 - \left(\frac{p}{3}\right) = - 1 - \ell_p.\qedhere
\end{align*}
\end{proof}

\begin{lemma}\label{Lemma:EpsilonSum}
$\displaystyle\sum_{k=2}^{p-1} \epsilon_k = 4 + \ell_p - p$.
\end{lemma}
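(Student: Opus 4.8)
The plan is to compute $\sum_{k=2}^{p-1}\epsilon_k$ by swapping the order of summation. Writing out the definition,
\begin{equation*}
\sum_{k=2}^{p-1}\epsilon_k
= \sum_{k=2}^{p-1}\sum_{x=0}^{p-1}\left(\frac{f_1(x)f_k(x)}{p}\right)
= \sum_{x=0}^{p-1}\left(\frac{f_1(x)}{p}\right)\sum_{k=2}^{p-1}\left(\frac{f_k(x)}{p}\right).
\end{equation*}
The inner sum over $k$ is, by the symmetry $f_k(x)=f_x(k)$ noted after \eqref{eq:Beta}, just $\sum_{k=2}^{p-1}\bigl(\tfrac{f_x(k)}{p}\bigr)$, i.e.\ a row sum of the matrix of Legendre symbols $\bigl(\tfrac{f_j(i)}{p}\bigr)$. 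By the first identity in \eqref{eq:LegendreSum}, $\sum_{k=0}^{p-1}\bigl(\tfrac{f_x(k)}{p}\bigr)=-1$ (valid for $1\le x\le p-1$; the case $x=0$ will have to be handled separately since $f_0(k)=(k-1)^2-2k+\dots$—actually $f_0$ is not covered by the lemma and its Legendre sum must be evaluated by hand). So for $1\le x\le p-1$ the inner sum equals $-1-\bigl(\tfrac{f_x(0)}{p}\bigr)-\bigl(\tfrac{f_x(1)}{p}\bigr)$, where $f_x(0)=(x-1)^2$ and $f_x(1)=f_1(x)=x^2-4x=x(x-4)$.

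First I would substitute these evaluations back in. For $x\ne 1$, $\bigl(\tfrac{(x-1)^2}{p}\bigr)=1$; for $x=1$ it is $0$. Thus the inner sum is $-2-\bigl(\tfrac{f_1(x)}{p}\bigr)$ for $x\notin\{0,1\}$, with small corrections at $x=0,1$. Multiplying by the outer factor $\bigl(\tfrac{f_1(x)}{p}\bigr)$ and summing over $x$ turns the whole expression into a combination of $\sum_x\bigl(\tfrac{f_1(x)}{p}\bigr)$ and $\sum_x\bigl(\tfrac{f_1(x)}{p}\bigr)^2$, both of which are given explicitly by \eqref{eq:LegendreSum} with $k=1$: they equal $-1$ and $p-1-\bigl(\tfrac{1}{p}\bigr)=p-2$ respectively. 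One then collects the boundary corrections at $x=0$ (where $f_1(0)=0$, so that term contributes nothing to the outer sum anyway) and $x=1$ (where $f_1(1)=-3$, contributing $\bigl(\tfrac{-3}{p}\bigr)$ times the inner sum at $x=1$, and $\bigl(\tfrac{-3}{p}\bigr)=\bigl(\tfrac{p}{3}\bigr)=\ell_p$ by quadratic reciprocity).

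The arithmetic should then collapse to $4+\ell_p-p$. The cleanest route may actually be to compute $\sum_{k=1}^{p-1}\epsilon_k$ first (a full row/column sum, slightly more symmetric) and then subtract $\epsilon_1=p-2$, which is already known from Lemma \ref{Lemma:19}; this isolates exactly the $k=1$ boundary term and avoids carrying it through. Either way, the bookkeeping is elementary: every sum that appears is one of the two sums in \eqref{eq:LegendreSum} (for $k=1$) plus a handful of explicit Legendre-symbol values of perfect squares and of $-3$.

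\textbf{Main obstacle.} The one genuinely delicate point is the exceptional index $x=0$ in the interchanged sum: $f_0(k)=k^2-2k+1=(k-1)^2$ is a perfect square, so $\sum_{k=0}^{p-1}\bigl(\tfrac{f_0(k)}{p}\bigr)=p-1$, which is \emph{not} $-1$, so the uniform formula from the lemma fails there. However, the outer factor at $x=0$ is $\bigl(\tfrac{f_1(0)}{p}\bigr)=\bigl(\tfrac{0}{p}\bigr)=0$, so this term drops out of $\sum_k\epsilon_k$ entirely and causes no harm — one just has to notice this rather than blindly apply \eqref{eq:LegendreSum}. After that, the computation is a routine matter of correctly tracking the two or three square and $-3$ corrections; getting the final constant right is just careful arithmetic.
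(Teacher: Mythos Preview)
Your proposal is correct and follows essentially the same approach as the paper: swap the order of summation, use the symmetry $f_k(x)=f_x(k)$ together with \eqref{eq:LegendreSum} to evaluate the inner sum, and then collect the boundary terms. The paper takes exactly the cleaner route you suggest at the end---it computes $\sum_{k=1}^{p-1}\epsilon_k = 2+\ell_p$ and then subtracts $\epsilon_1=p-2$---and your observation that the potentially problematic $x=0$ term is annihilated by the outer factor $\bigl(\tfrac{f_1(0)}{p}\bigr)=0$ is precisely the point the paper glosses over.
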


\begin{proof}
The symmetry $f_k(x) = f_x(k)$, \eqref{eq:LegendreSum}, and quadratic reciprocity imply that
\begin{align*}
\sum_{k=1}^{p-1} \epsilon_k
&=  \sum_{k=1}^{p-1} \sum_{x = 0}^{p-1}
\left( \frac{f_1(x)f_k(x)}{p} \right) 
=\sum_{x = 0}^{p-1}\left( \frac{f_1(x)}{p} \right)\sum_{k=1}^{p-1}\left( \frac{f_x(k)}{p} \right) \\
&= \sum_{x = 0}^{p-1}\left( \frac{f_1(x)}{p} \right) \left[-1-\left( \frac{f_x(0)}{p} \right)\right] 
= \sum_{x = 0}^{p-1}\left( \frac{f_1(x)}{p} \right) \left[-1-\left( \frac{(x-1)^2}{p} \right)\right] \\
&=\left( \frac{f_1(1)}{p} \right) - 2 \sum_{x = 0}^{p-1}\left( \frac{f_1(x)}{p} \right) 
= 2 + \left( \frac{-3}{p} \right) = 2 + \left( \frac{p}{3} \right). 
\end{align*}
Subtract $\epsilon_1 = p-2$ and obtain the desired result.
\end{proof}

The following is a special case of a formula due to K.S.~Williams \cite{WilliamsPAMS}.  
In this instance, it concerns the birational equivalence between a quartic and a cubic elliptic curve.  
We provide an independent proof of the relevant case.

\begin{lemma}\label{Lemma:Transform}
Let $p$ be prime,
\begin{equation*}
D = B^2 - 4 C,\qquad
d = b^2 - 4c,\quad\text{and}\quad
\delta = 4C - 2bB + 4c.
\end{equation*} 
If $B-b \neq 0$ and the polynomials $x^2 + bx + c$ and $x^2 + Bx + C$ share no common roots modulo $p$, then
\begin{equation*}
\sum_{x=0}^{p-1} \left( \frac{(x^2+bx+c)(x^2+Bx+C)}{p} \right)
= -1+ \sum_{x=0}^{p-1} \left( \frac{x(Dx^2 + \delta x + d)}{p}\right) .
\end{equation*}
\end{lemma}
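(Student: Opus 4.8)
The plan is to reduce the quartic Legendre-symbol sum to a sum over a cubic curve by an explicit birational substitution, carefully tracking the finitely many points where that substitution degenerates. First I would combine the two quadratics into a single quartic $Q(x) = (x^2+bx+c)(x^2+Bx+C)$ and seek a change of variable that ``splits off'' a factor of $x$. A natural approach: after a translation $x \mapsto x + t$ one can try to arrange that the quartic becomes $(x^2+\beta x + \gamma)^2 - (\text{linear})^2 \cdot x$ type, but more directly I would use the classical device for a product of two monic quadratics. Write $x^2+bx+c$ and $x^2+Bx+C$ and note their difference is the linear polynomial $(b-B)x + (c-C)$; since $b \neq B$ this is genuinely linear. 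Setting $u = x^2+bx+c$, one has $x^2+Bx+C = u + (B-b)x + (C-c)$, so $Q = u\bigl(u + (B-b)x+(C-c)\bigr)$. This still has $x$ and $u$ intertwined, so the cleaner route is the substitution that sends $x$ to a value making one quadratic proportional to $x$.

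The key computational step is the substitution
\begin{equation*}
x \longmapsto \frac{(c-C) + (b-B)\,x \cdot(\text{something})}{\cdots},
\end{equation*}
and rather than guess, I would work backwards from the claimed answer. The right side involves $x(Dx^2+\delta x+d)$ with $D=B^2-4C$, $d=b^2-4c$, $\delta=4C-2bB+4c$; note $D$ and $d$ are the discriminants of the two quadratics and $\delta$ is a symmetric ``mixed discriminant.'' Observe also the algebraic identity $\delta^2 - Dd = 16(C-c)^2 - \text{(lower)}$, which hints that $Dx^2+\delta x+d$ factors over a quadratic extension in a way mirroring the original quadratics. Concretely, I would verify that the fractional-linear substitution
\begin{equation*}
x = \frac{bX - 2c}{2X - b} \quad\text{(or its analogue with } B,C\text{)}
\end{equation*}
type map, composed appropriately, transforms $(x^2+bx+c)(x^2+Bx+C)$ into a constant times $X(DX^2+\delta X + d)$ up to a square factor. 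Because Legendre symbols are insensitive to nonzero square factors and the substitution is a bijection on $\F_p$ away from its pole, the sum transforms as claimed except for an error term counting (i) the pole of the substitution, (ii) any common zeros — excluded by hypothesis — and (iii) the point(s) where the square factor vanishes. The constant $-1$ on the right-hand side is exactly the bookkeeping for these finitely many exceptional $x$, and I would pin it down by the same ``number of solutions'' counting used in the proof of the lemma giving \eqref{eq:LegendreSum}: a sum $\sum_x \left(\tfrac{g(x)^2 h(x)}{p}\right)$ differs from $\sum_x \left(\tfrac{h(x)}{p}\right)$ by $-\sum_{g(x)=0}\left(\tfrac{h(x)}{p}\right)$, which here evaluates to $+1$ in absolute terms, producing the $-1$.

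The steps in order: (1) record the elementary identity $\sum_x \left(\tfrac{A(x)}{p}\right) = \sum_x \left(\tfrac{B(x)}{p}\right) + (\text{correction})$ whenever $A$ and $B$ agree up to a nonzero square on the complement of a small set; (2) exhibit the explicit rational substitution $\psi$ sending the quartic to $cX(DX^2+\delta X+d)$ times a perfect square — I would present $\psi$ and then \emph{verify} the polynomial identity by direct expansion rather than deriving it, citing $B-b\neq 0$ for well-definedness; (3) check that $\psi$ is a bijection of $\F_p \setminus \{\text{pole}\}$ onto $\F_p \setminus \{\text{one omitted value}\}$, using that the two quadratics share no common root to rule out degenerate fibers; (4) assemble the pieces, and separately account for the point at infinity / the pole and the zero of the square factor to collapse all corrections into the single constant $-1$; a final sanity check against Lemma \ref{Lemma:19} (the cases $b,c$ coming from $f_1$ and $B,C$ from $f_k$, where $D = B^2-4C$ etc. should reproduce $\epsilon_k$ in cubic form) would confirm the normalization.

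The main obstacle I anticipate is producing the correct explicit substitution $\psi$ with the right normalizing constant, and then being scrupulous about the exceptional set: off-by-one errors in the constant term are the classic pitfall here, since a single mishandled fiber (the pole of $\psi$, a zero of the Jacobian, or the behavior at $x$ where the square factor vanishes) shifts the answer by $\pm 1$ or $\pm\left(\tfrac{\ast}{p}\right)$. I would therefore do step (4) twice — once by the substitution bijection argument and once by matching leading behavior / comparing with a known special case — to be certain the correction is exactly $-1$ and carries no hidden Legendre-symbol dependence. The polynomial identity in step (2), while tedious, is mechanical and not conceptually hard; the care lies entirely in the bookkeeping.
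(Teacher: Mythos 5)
Your plan never actually produces its central object, and the form you sketch for it cannot work in general. Everything in your argument funnels through step (2) --- an explicit substitution $\psi$ turning $(x^2+bx+c)(x^2+Bx+C)$ into $X(DX^2+\delta X+d)$ times a square --- but you leave $\psi$ as ``work backwards from the claimed answer.'' Worse, the shape you propose (a fractional-linear map in $x$ alone, bijective off its pole) is structurally wrong. A M\"obius substitution $x=\phi(X)$ sends the quartic to $N(X)/(\text{denom})^4$ with $\deg N=4$ unless $\phi(\infty)$ is a root of the quartic; since the two quadratics may be irreducible over $\F_p$, there need be no such rational root, so you cannot drop the degree from $4$ to $3$ this way, and a degree-$4$ polynomial cannot equal a cubic times a nonconstant square factor of odd multiplicity. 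The classical quartic-to-cubic reduction you are reaching for necessarily involves the $y$-coordinate of $y^2=Q(x)$, not a change of variable in $x$ alone. Your step (3) bijectivity claim is the symptom: the true relationship here is generically two-to-one, not one-to-one, and the unspecified constant $c$ in your ``$cX(DX^2+\delta X+d)$'' is another loose end --- if $c$ is a nonresidue your whole sum changes sign.

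The paper's proof uses a fiber-counting argument instead of a substitution. Since the two quadratics share no common root, one may write the summand (where $x^2+Bx+C\neq 0$) as $\left(\tfrac{\theta(x)}{p}\right)$ with $\theta(x)=\tfrac{x^2+bx+c}{x^2+Bx+C}$, then group terms by the value $y=\theta(x)$. The fiber over $y$ is the solution set of $(y-1)x^2+(By-b)x+(Cy-c)=0$, whose discriminant is exactly $Dy^2+\delta y+d$; so for $y\neq 0,1$ the fiber has $1+\left(\tfrac{Dy^2+\delta y+d}{p}\right)$ points, and for $y=1$ the equation degenerates to a linear one with exactly one solution (this is where $B-b\neq 0$ enters, via $D+\delta+d=(B-b)^2$). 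Summing $\left(\tfrac{y}{p}\right)\cdot|\theta^{-1}(y)|$ over $y\neq 0$ produces $\sum_{y}\left(\tfrac{y(Dy^2+\delta y+d)}{p}\right)$ plus $\sum_{y\neq 0,1}\left(\tfrac{y}{p}\right)=-1$, which is the entire source of the constant term --- not the pole/Jacobian bookkeeping you describe. If you want to salvage your write-up, replace steps (2)--(3) with this two-to-one discriminant count; the rest of your framing (step (1) and the sanity check against the $\epsilon_k$ computation) is fine but does not touch the actual difficulty.
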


\begin{proof}
For $x^2 + Bx + C \neq 0$, define the $\F_p$-valued function
\begin{equation*}
\theta(x) = \frac{x^2 + bx + c}{x^2 + Bx + C}.
\end{equation*}
Suppose that $y \neq 0$, then since $x^2 + bx + c$ and $x^2 + Bx + C$ share no common roots modulo $p$, the number of solutions to $\theta(x) = y$ equals the number
of solutions to
\begin{equation}\label{eq:QuadraticY}
    (y - 1)x^2 + (By - b)x + (Cy - c) = 0.
\end{equation}
If $y \neq 1$, then the number of solutions to \eqref{eq:QuadraticY} is
\begin{equation*}
1+ \left( \frac{(By - b)^2 - 4(y - 1)(Cy - c)}{p} \right)
= 1 + \left(\frac{Dy^2 + \delta y + d}{p} \right).
\end{equation*}
If $y = 1$, then \eqref{eq:QuadraticY} is 
\begin{equation*}
(B - b)x + (C - c) = 0,
\end{equation*}
which has exactly
\begin{equation*}
1 = \left( \frac{(B - b)^2}{p} \right)
= \left(\frac{1(D (1)^2 + \delta (1) + d)}{p} \right)
\end{equation*}
solutions since $B-b \neq 0$.  Then
\begin{align*}
&\sum_{x\in \F_p} \left( \frac{(x^2+bx+c)(x^2+Bx+C)}{p} \right) \\
&\qquad= \sum_{\substack{ x \in \F_p \\ x^2+Bx+C \neq 0}} \left( \frac{\theta(x)}{p} \right)  
= \sum_{y \in \F_p\backslash\{0\}} \left( \frac{y}{p} \right) \sum_{ \substack{x \in \F_p \\ \theta(x) =y}}1  \\
&\qquad = \sum_{ \substack{x \in \F_p \\ \theta(x) =1}}1 + \sum_{y \in \F_p\backslash\{0,1\}}  \left( \frac{y}{p} \right)\sum_{ \substack{x \in \F_p \\ \theta(x) =y}}1 \\
&\qquad = \left( \frac{1(D (1)^2 + \delta (1) + d)}{p} \right) 
    + \sum_{y \in \F_p\backslash\{0,1\}}  \left( \frac{y}{p} \right)\left[ 1 + \left(\frac{Dy^2 + \delta y + d}{p} \right) \right] \\
&\qquad = \sum_{y \in \F_p\backslash\{0,1\}} \left( \frac{y}{p} \right)+  \sum_{y \in \F_p\backslash\{0\}} \left(\frac{y(Dy^2 + \delta y + d)}{p} \right) \\
&\qquad =-1 +  \sum_{y \in \F_p} \left(\frac{y(Dy^2 + \delta y + d)}{p} \right) .\qedhere
\end{align*}
\end{proof}

\begin{lemma}\label{Lemma:Bound}
For $k\neq 1,9$, 
\begin{equation}
\epsilon_k = -1 - a_p(E_k),
\end{equation}
in which 
\begin{equation*}
a_p(E_k) = p+1 - |E_k(\F_p)|
\end{equation*}
denotes the Frobenius trace of the elliptic curve
\begin{equation*}
E_k(\F_p) = \big\{ (x,y) \in \F_p^2 : y^2 = g_k(x)\big\},
\end{equation*}
where
\begin{equation*}
g_k(x) =x\big(4k x^2 +  (k^2 - 6 k  -3) x + 4\big).
\end{equation*}
\end{lemma}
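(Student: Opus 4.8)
The plan is to apply Lemma~\ref{Lemma:Transform} with $x^2+bx+c=f_1(x)=x^2-4x$ and $x^2+Bx+C=f_k(x)=x^2-2(k+1)x+(k-1)^2$, i.e.\ with $(b,c)=(-4,0)$ and $(B,C)=(-2(k+1),(k-1)^2)$. First I would verify the two hypotheses of that lemma: $B-b=2-2k\neq 0$ since $k\neq 1$, and $f_1$ and $f_k$ have no common root modulo $p$ precisely because $k\neq 1,9$, as noted just before Lemma~\ref{Lemma:19}. Then a short computation produces the quantities appearing in Lemma~\ref{Lemma:Transform}:
\[
d=b^2-4c=16,\qquad D=B^2-4C=4(k+1)^2-4(k-1)^2=16k,
\]
\[
\delta=4C-2bB+4c=4(k-1)^2-16(k+1)=4(k^2-6k-3).
\]
Feeding these into Lemma~\ref{Lemma:Transform} gives
\[
\epsilon_k=-1+\sum_{x=0}^{p-1}\left(\frac{x\bigl(16kx^2+4(k^2-6k-3)x+16\bigr)}{p}\right).
\]

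Next I would factor the $4$ out of the cubic: $x\bigl(16kx^2+4(k^2-6k-3)x+16\bigr)=4\,x\bigl(4kx^2+(k^2-6k-3)x+4\bigr)=4g_k(x)$, and since $\left(\frac{4}{p}\right)=1$ the Legendre symbol is unchanged, so $\epsilon_k=-1+\sum_{x=0}^{p-1}\left(\frac{g_k(x)}{p}\right)$. To finish, I would turn the right-hand sum into a point count: for each $x$ there are exactly $1+\left(\frac{g_k(x)}{p}\right)$ values of $y$ with $y^2=g_k(x)$, so the affine curve has $p+\sum_x\left(\frac{g_k(x)}{p}\right)$ points, and adjoining the single point $[0:1:0]$ at infinity of the smooth model gives $|E_k(\F_p)|=p+1+\sum_x\left(\frac{g_k(x)}{p}\right)$. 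Hence $a_p(E_k)=p+1-|E_k(\F_p)|=-\sum_x\left(\frac{g_k(x)}{p}\right)$, and therefore $\epsilon_k=-1-a_p(E_k)$, as claimed.

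The step I expect to be the only genuine obstacle is confirming that $E_k$ really is an elliptic curve for $k\neq 1,9$, so that the Frobenius trace and the projective point count above are meaningful. Here $g_k(x)=x\bigl(4kx^2+(k^2-6k-3)x+4\bigr)$; since $2\le k\le p-1$ forces $k\not\equiv 0\pmod p$, the leading coefficient $4k$ is a unit, and $x=0$ is not a root of the quadratic factor (whose constant term is $4$), so $g_k$ has a repeated root exactly when $4kx^2+(k^2-6k-3)x+4$ does. Its discriminant is $(k^2-6k-3)^2-64k$, which one checks equals $(k-1)^3(k-9)$, and this is nonzero precisely when $k\neq 1,9$ (interpreted modulo $p$ when $p=5,7$). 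Thus $g_k$ is squarefree, the projective model $Y^2Z=4kX^3+(k^2-6k-3)X^2Z+4XZ^2$ is nonsingular with its unique point at infinity $[0:1:0]$, and the point count used above is the correct one, completing the proof.
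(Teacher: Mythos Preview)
Your proof is correct and follows essentially the same route as the paper's: apply Lemma~\ref{Lemma:Transform} with $(b,c)=(-4,0)$ and $(B,C)=(-2(k+1),(k-1)^2)$, compute $D=16k$, $d=16$, $\delta=4(k^2-6k-3)$, and then convert the resulting Legendre-symbol sum into a point count on $E_k$. You are slightly more explicit than the paper in checking the hypothesis $B-b\neq 0$, in factoring the $4$ out of $Dx^2+\delta x+d$ via $\left(\tfrac{4}{p}\right)=1$, and in verifying that the discriminant $(k-1)^3(k-9)$ is nonzero so that $E_k$ is genuinely elliptic, but the argument is otherwise identical.
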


\begin{proof}
Suppose that $k\neq 1,9$.
First observe that the discriminant of the quadratic factor of $g_k(x)$ is $(k-9)(k-1)^3$.  Thus, 
$g_k(x)$ has distinct roots and $y^2 = g_k(x)$ defines an elliptic curve $E_k$
over $\F_p$.  Since $f_1(x)$ and  $f_k(x)$ share no common roots in $\F_p$,
we apply Lemma \ref{Lemma:Transform} with
\begin{equation*}
a = 1,\quad b = -4,\quad  c = 0,\quad 
A = 1,\quad  B = -2(k+1),\quad  C = (k-1)^2,
\end{equation*}
so that 
\begin{equation*}
D = 16k, \qquad d = 16, \quad \text{and} \quad \delta = 4 (k^2 - 6 k  -3). 
\end{equation*}
Then 
\begin{align*}
\epsilon_k 
&= \sum_{x = 0}^{p-1} \left( \frac{f_1(x)f_k(x)}{p} \right) 
= \sum_{x = 0}^{p-1} \left( \frac{(x^2-4x)(x^2-2(k+1)x+(k-1)^2)}{p} \right) \\
&= -1+\sum_{x=0}^{p-1} \left( \frac{x(4k x^2 +  (k^2 - 6 k  -3) x + 4)}{p} \right) 
= -1+\sum_{x=0}^{p-1} \left( \frac{g_k(x)}{p} \right) \\
&=-p-2 + \left[p+1 +  \sum_{x=0}^{p-1} \left( \frac{g_k(x)}{p} \right) \right]
= -p-2 + |E_k(\F_p)| \\
&= -p-2 + \big(p+1 - a_p(E_k)\big) 
= -1 - a_p(E_k). \qedhere
\end{align*}
\end{proof}

\begin{lemma}\label{Lemma:Entries}
For $k=1,2,\ldots,p+2$,
\begin{equation*}
[T^2]_{1,k}
=
\begin{cases}\displaystyle
3p-6 & \text{if $k=1$},\\[5pt]
p-4+\epsilon_k & \text{if $k=2,\ldots,p-1$},\\[5pt]
p-3 - \ell_p & \text{if $k = p,p+1$},\\[8pt]
(1+\ell_p) \sqrt{p-1} & \text{if $k=p+2$}.
\end{cases}
\end{equation*}
\end{lemma}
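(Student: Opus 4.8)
The plan is to use the symmetry $T = T^{\mathsf{T}}$ (which holds because $T = UDU$ with $U = U^{\mathsf{T}}$ and $D$ diagonal), so that $[T^2]_{1,k} = \sum_{m=1}^{p+2} T_{1,m}T_{m,k} = \sum_{m=1}^{p+2} T_{1,m}T_{k,m}$ is simply the Euclidean inner product of the first row of $T$ with its $k$-th row. From \eqref{eq:T}, the first row is $(t_{1,1},t_{1,2},\ldots,t_{1,p-1},0,0,\sqrt{p-1})$, where $t_{1,j} = 1 + \left(\frac{f_1(j)}{p}\right)$ and $f_1(x) = x(x-4)$. Three elementary observations will be used throughout: (i) $f_1(0) = 0$, so $\sum_{x=1}^{p-1}\left(\frac{f_1(x)h(x)}{p}\right) = \sum_{x=0}^{p-1}\left(\frac{f_1(x)h(x)}{p}\right)$ for any $h$, which lets us transfer the identities \eqref{eq:LegendreSum} (sums over $\{0,\ldots,p-1\}$) to the column range $\{1,\ldots,p-1\}$ built into $T$; (ii) $f_1(1) = -3$ and, since $p\ge 5$, $\left(\frac{-3}{p}\right) = \ell_p$, so $t_{1,1} = 1 + \ell_p$; and (iii) for $2 \le k \le p-1$ one has $f_k(0) = (k-1)^2$, a nonzero square mod $p$, so $\left(\frac{f_k(0)}{p}\right) = 1$.

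Then the four cases fall out by substitution. For $k=1$, the inner product of the first row with itself is $\sum_{j=1}^{p-1}\left(1 + \left(\frac{f_1(j)}{p}\right)\right)^2 + (p-1)$; expanding the square and invoking \eqref{eq:LegendreSum} (with $\sum_{x}\left(\frac{f_1(x)}{p}\right) = -1$ and $\sum_{x}\left(\frac{f_1(x)}{p}\right)^2 = p-1-\left(\frac{1}{p}\right) = p-2$) gives $(p-1) - 2 + (p-2) + (p-1) = 3p-6$. For $k = p+2$, the last row of $T$ is $(\sqrt{p-1},0,\ldots,0)$, so the inner product is $t_{1,1}\sqrt{p-1} = (1+\ell_p)\sqrt{p-1}$. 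For $k \in \{p,p+1\}$, rows $p$ and $p+1$ carry a $1$ in each column $2,\ldots,p-1$, are zero in columns $1$ and $p+2$, and their remaining nonzero entry sits in a border column where the first row vanishes; hence the inner product collapses to $\sum_{j=2}^{p-1} t_{1,j} = \big(\sum_{j=1}^{p-1} t_{1,j}\big) - t_{1,1} = (p-2) - (1+\ell_p) = p-3-\ell_p$.

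The remaining case $k = 2,\ldots,p-1$ is the only one that produces $\epsilon_k$. There the $k$-th row of $T$ is $(t_{k,1},\ldots,t_{k,p-1},1,1,0)$, and since the first row is zero in columns $p,p+1$ while the $k$-th row is zero in column $p+2$, the border indices contribute nothing, giving $[T^2]_{1,k} = \sum_{j=1}^{p-1} t_{1,j}t_{k,j}$. Multiplying out $t_{1,j}t_{k,j} = 1 + \left(\frac{f_1(j)}{p}\right) + \left(\frac{f_k(j)}{p}\right) + \left(\frac{f_1(j)f_k(j)}{p}\right)$ and summing over $j$: the constant term gives $p-1$; the $f_1$ term gives $-1$; the $f_k$ term gives $\sum_{x=0}^{p-1}\left(\frac{f_k(x)}{p}\right) - \left(\frac{f_k(0)}{p}\right) = -1-1 = -2$ by (iii); and the cross term gives $\sum_{x=0}^{p-1}\left(\frac{f_1(x)f_k(x)}{p}\right) = \epsilon_k$ by (i) and \eqref{eq:Epsilon}. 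Adding, $[T^2]_{1,k} = (p-1) - 1 - 2 + \epsilon_k = p-4+\epsilon_k$.

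I do not anticipate a genuine obstacle here: the computation is pure bookkeeping. The only points demanding care are reading off the correct sparse rows of $T$ at the border indices $p,p+1,p+2$, and the mismatch between the summation range $\{1,\ldots,p-1\}$ intrinsic to the entries $t_{i,j}$ and the range $\{0,\ldots,p-1\}$ of \eqref{eq:LegendreSum}; both are dispatched by $f_1(0) = 0$ and $f_k(0) = (k-1)^2$.
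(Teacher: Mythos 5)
Your proof is correct and follows essentially the same route as the paper's: a direct entrywise computation of $[T^2]_{1,k}$ exploiting the sparse border rows of $T$, the symmetry $t_{i,j}=t_{j,i}$, and the Legendre-sum identities \eqref{eq:LegendreSum}, with the range shift handled via $f_1(0)=0$ and $f_k(0)=(k-1)^2$. The only difference is cosmetic bookkeeping (you extend each expanded sum to $x=0$ term by term, while the paper shifts the whole product sum at once), so there is nothing further to add.
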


\begin{proof}
Use \eqref{eq:T} and \eqref{eq:LegendreSum} to compute 
\begin{align*}
[T^2]_{1,1} 
&= \sum_{u =1}^{p-1} t_{1,u }^2 + (p-1) 
= (p-1) + \sum_{u =1}^{p-1} 
\left[1 + \left( \frac{f_1(u )}{p} \right) \right]^2\\
&= p-2 + \sum_{x=0}^{p-1}
\left[1 + \left( \frac{f_1(x)}{p} \right) \right]^2  \\
&= (p-2) + p + 2\sum_{x=0}^{p-1}\left( \frac{f_1(x)}{p} \right) + \sum_{x=0}^{p-1}\left( \frac{f_1(x)}{p} \right)^2\\
&=2p - 2 +2(-1) + (p-2)  \\
&=3p-6.
\end{align*}
For $k=2,3,\ldots,p-1$, a similar computation and \eqref{eq:LegendreSum} yield
\begin{align*}
[T^2]_{1,k}
&=\sum_{u =1}^{p-1} t_{1,u } t_{u ,k}
=\sum_{u =1}^{p-1} \left[ 1 + \left( \frac{f_1(u )}{p} \right) \right]
\left[ 1 + \left( \frac{f_k(u )}{p} \right) \right] \\
&=-2 + \sum_{x=0}^{p-1} \left[ 1 + \left( \frac{f_1(x)}{p} \right) \right]
\left[ 1 + \left( \frac{f_k(x)}{p} \right) \right] \\
&=p-2 + \sum_{x=0}^{p-1} \left( \frac{f_1(x)}{p} \right) + \sum_{x=0}^{p-1} \left( \frac{f_k(x)}{p} \right) 
+ \sum_{x=0}^{p-1} \left( \frac{f_1(x)}{p} \right)\left( \frac{f_k(x)}{p} \right) \\
&= p-4 + \epsilon_k.
\end{align*}
For $k=p,p+1$, \eqref{eq:LegendreSum} and quadratic reciprocity provide
\begin{align*}
[T^2]_{1,k}
&= \sum_{u =2}^{p-1} t_{1,u }
= \sum_{u =2}^{p-1} \left[ 1 + \left( \frac{ f_1(u )}{p}\right)\right] \\
&= (p-2) -\left( \frac{f_1(0)}{p}\right)-\left( \frac{f_1(1)}{p}\right)
+ \sum_{x=0}^{p-1} \left( \frac{ f_1(x)}{p}\right) \\
&=(p-2) - \left( \frac{0}{p} \right) - \left( \frac{-3}{p} \right) -1 \\
&= p-3 - \left( \frac{-3}{p} \right) = p-3 - \ell_p.
\end{align*}
Finally,
\begin{align*}
[T^2]_{1,p+2}
&= t_{1,1} \sqrt{p-1} = \left[1 + \left( \frac{f_1(1)}{p}\right)\right] \sqrt{p-1} \\
&= \left[1 + \left( \frac{-3}{p} \right)\right] \sqrt{p-1} 
= (1+\ell_p) \sqrt{p-1} . \qedhere
\end{align*}
\end{proof}

We are now ready to complete the proof of Theorem \ref{Theorem:Main}.
Lemma \ref{Lemma:Entries} ensures that
\begin{equation*}
[T^2]_{1,k}^2
= 
\begin{cases}\displaystyle
9 p^2-36 p+36 & \text{if $k=1$},\\[5pt]
(p-4)^2 + 2(p-4) \epsilon_k + \epsilon_k^2 & \text{if $k=2,3,\ldots,p-1$},\\[5pt]
p^2 -p(6+2\ell_p ) + 10 + 6\ell_p  & \text{if $k = p,p+1$},\\[5pt]
2(1+\ell_p) (p-1)  & \text{if $k=p+2$}.
\end{cases}
\end{equation*}
A computation yields
\begin{align*}
[T^4]_{1,1}
&= \sum_{k=1}^{p+2} [T^2]_{1,k}^2 \\
&= (9 p^2-36 p+36 ) + \sum_{k=2}^{p-1}\big((p-4)^2 + 2(p-4) \epsilon_k + \epsilon_k^2\big)\\
&\qquad + 2\big(p^2 -p(6+2\ell_p  ) + 10 + 6\ell_p \big) + (1+\ell_p )^2 p - 2(1+\ell_p ) \\
&=p^3 + p^2 -2 (7+\ell_p )p + 2(11+5\ell_p ) + 2(p-4) \sum_{k=2}^{p-1} \epsilon_k + \sum_{k=2}^{p-1} \epsilon_k^2 \\
&= p^3 + p^2 -2 (7+\ell_p )p + 2(11+5\ell_p ) + 2(p-4) (\underbrace{4+\ell_p -p}_{\text{Lemma \ref{Lemma:EpsilonSum}}}) + \sum_{k=2}^{p-1} \epsilon_k^2 \\
&=p^3 - p^2 + 2p + 2(\ell_p -5) + \sum_{k=2}^{p-1} \epsilon_k^2 \\
&=p^3 - p^2 + 2p + 2(\ell_p -5) + \underbrace{(-1-\ell_p )^2}_{\text{Lemma \ref{Lemma:19}}}+ \sum_{\substack{k=2\\k\neq 9}}^{p-1} \epsilon_k^2 \\
&=p^3 - p^2 + 2p + 2(\ell_p -5) + 2(\ell_p +1) + \sum_{\substack{k=2\\k\neq 9}}^{p-1} \epsilon_k^2 \\
&=p^3 - p^2 + 2p + 4\ell_p  - 8+ \sum_{\substack{k=2\\k\neq 9}}^{p-1} \epsilon_k^2 .
\end{align*}
The definitions \eqref{eq:U} and \eqref{eq:D} imply that
\begin{equation*}
[UD^4U]_{1,1}
= \frac{1}{p^2} \bigg( \sum_{u =1}^{p-1} K_{u }^6 + 2 +(p-1)^5 \bigg) .
\end{equation*}
The equality $T^4 = UD^4U$ reveals that
\begin{equation*}
\sum_{u =1}^{p-1} K_{u }^6 + 2 +(p-1)^5 = p^2\bigg(p^3 - p^2 + 2p + 4 \ell_p - 8+ \sum_{\substack{k=2\\k\neq 9}}^{p-1} \epsilon_k^2 \bigg).
\end{equation*}
Consequently,
\begin{align*}
\sum_{u =1}^{p-1} K_{u }^6
&= \bigg( p^5 - p^4 + 2p^3 + (4\ell_p-8)p^2 + p^2 \sum_{\substack{k=2\\k\neq 9}}^{p-1} \epsilon_k^2 \bigg) - 2 - (p-1)^5 \\
&= 4 p^4 -8p^3+ (4 \ell_p+2) p^2-5 p-1  + p^2 \sum_{\substack{k=2\\k\neq 9}}^{p-1} \epsilon_k^2\\
&= 4 p^4 -8p^3+ (4 \ell_p+2) p^2-5 p-1  + p^2 \sum_{\substack{k=2\\k\neq 9}}^{p-1} (a_p(E_k)+1)^2.
\end{align*}
This is the desired formula \eqref{eq:V6}. \qed

\section{Future work}\label{FutureWork}

It follows from \eqref{eq:U} and \eqref{eq:D} that the $n$th Kloosterman power moment is 
\begin{align*}
    V_n(p) = p^2 [T_1]_{1,1}^{n-2} + 2(-1)^{n-1} - (p-1)^{n-1}.
\end{align*}
Thus, the problem of calculating $V_n(p)$ reduces to the evaluation of sums and products of Legendre symbols. 
For the sixth power moment, Lemmas \ref{Lemma:EpsilonSum} and \ref{Lemma:Transform} allowed us to provide an evaluation 
in terms of power moments of Frobenius traces. For higher moments, similar, but more complicated, 
techniques are needed.  There is much work to be done in this direction.

In \cite{Kaplan}, Kaplan and Petrow provide a method to evaluate power moments of Frobenius traces of families of elliptic
curves whose group of $k$-points contains a particular subgroup. Their evaluation is in terms of traces of Hecke operators. 
Given that is possible to relate the constants appearing in the evaluation of the fifth through eighth power moments of Kloosterman sums in terms of Hecke operators, one wonders if the Kaplan--Petrow method can be used to evaluate the power moment of the Frobenius traces that appear in Theorem \ref{Theorem:Main} and whether or not such terms will appear in higher power moments when evaluated using \eqref{eq:U} and \eqref{eq:D}. In particular, it follows from \cite{Livne, Peters}, that $V_5(p)$ can be expressed in terms of Hecke eigenvalues for a weight $3$ newform on $\Gamma_0(15)$. That $V_6(p)$ can be expressed in terms of Hecke eigenvalues for a weight $4$ newform on $\Gamma_0(6)$ follows from \cite{Hulek}. 
Evans conjectured that $V_7(p)$ and $V_8(p)$ can be evaluated in terms of Hecke eigenvalues for a weight $3$ newform on $\Gamma_0(525)$ and for a weight $6$ newform on $\Gamma_0(525)$, respectively \cite{Evans, Evans2}. Yun proved Evans' conjectures in \cite{Yun2}.

Mixed Kloosterman moments are also of interest and have been studied in \cite{Chavez, CKS, Kutzko, Lehmer}. 
From \eqref{eq:U} and \eqref{eq:D}, we have
\begin{equation}\label{eq:Mixed}
\sum_{u=1}^{p-1} K_uK_{a_1u}K_{a_2u}\cdots K_{a_nu}=p^2\left[T_{a_1}T_{a_2}\cdots T_{a_{n-1}}\right]_{1,a_n}+2(-1)^n-(p-1)^n.
\end{equation}
The second and third mixed moments are given by
\begin{align*}
    \sum_{u=1}^{p-1} K_u K_{au} &= -p ,\\
    \sum_{u=1}^{p-1} K_u K_{au} K_{bu} &= \left( \frac{f_a(b)}{p}\right)p^2 + 2p;
\end{align*}
see \cite{Lehmer, Kutzko, CKS}.
In \cite{Chavez}, \'A.~Ch\'avez and the second author showed that
\begin{align*}
	\sum_{u=1}^{p-1} K_uK_{au}K_{bu}K_{cu} = \delta_{a,1}\delta_{b,c}p^3 - \left[ \left(\frac{bc}{p}\right)a_p + 2\right]p^2 - 3p - 1,
\end{align*}
in which $a_p$ is a certain Frobenius trace. In light of \eqref{eq:Mixed}, a similar evaluation for higher mixed moments appears within reach.

\bibliography{K6M}{}

\begin{thebibliography}{10}

\bibitem{Andre1}
Carlos A.~M. Andr\'e.
\newblock The basic character table of the unitriangular group.
\newblock {\em J. Algebra}, 241(1):437--471, 2001.

\bibitem{SESUP}
J.~L. Brumbaugh, Madeleine Bulkow, Patrick~S. Fleming, Luis~Alberto
  Garcia~German, Stephan~Ramon Garcia, Gizem Karaali, Matt Michal, Andrew~P.
  Turner, and Hong Suh.
\newblock Supercharacters, exponential sums, and the uncertainty principle.
\newblock {\em J. Number Theory}, 144:151--175, 2014.

\bibitem{ChahalMonthly}
Jasbir~S. Chahal and Brian Osserman.
\newblock The {R}iemann hypothesis for elliptic curves.
\newblock {\em Amer. Math. Monthly}, 115(5):431--442, 2008.

\bibitem{Chavez}
\'Angel Ch\'avez and George Todd.
\newblock Supercharacters and mixed moments of {K}loosterman sums.
\newblock {\em International Journal of Number Theory}.
\newblock in press.

\bibitem{CR62}
Charles~W. Curtis and Irving Reiner.
\newblock {\em Representation theory of finite groups and associative
  algebras}.
\newblock Pure and Applied Mathematics, Vol. XI. Interscience Publishers, a
  division of John Wiley \& Sons, New York-London, 1962.

\bibitem{Davenport}
H.~Davenport.
\newblock On certain exponential sums.
\newblock {\em J. Reine Angew. Math.}, 169:158--176, 1933.

\bibitem{Diaconis}
Persi Diaconis and I.~M. Isaacs.
\newblock Supercharacters and superclasses for algebra groups.
\newblock {\em Trans. Amer. Math. Soc.}, 360(5):2359--2392, 2008.

\bibitem{Evans2}
Ron Evans.
\newblock Hypergeometric {$_3F_2(1/4)$} evaluations over finite fields and
  {H}ecke eigenforms.
\newblock {\em Proc. Amer. Math. Soc.}, 138(2):517--531, 2010.

\bibitem{Evans}
Ronald Evans.
\newblock Seventh power moments of {K}loosterman sums.
\newblock {\em Israel J. Math.}, 175:349--362, 2010.

\bibitem{CKS}
Patrick~S. Fleming, Stephan~Ramon Garcia, and Gizem Karaali.
\newblock Classical {K}loosterman sums: representation theory, magic squares,
  and {R}amanujan multigraphs.
\newblock {\em J. Number Theory}, 131(4):661--680, 2011.

\bibitem{RSS}
Christopher~F. Fowler, Stephan~Ramon Garcia, and Gizem Karaali.
\newblock Ramanujan sums as supercharacters.
\newblock {\em Ramanujan J.}, 35(2):205--241, 2014.

\bibitem{GHM}
Stephan~Ramon Garcia, Trevor Hyde, and Bob Lutz.
\newblock Gauss's hidden menagerie: from cyclotomy to supercharacters.
\newblock {\em Notices Amer. Math. Soc.}, 62(8):878--888, 2015.

\bibitem{Hasse}
Helmut Hasse.
\newblock Zur {T}heorie der abstrakten elliptischen {F}unktionenk\"orper {III}.
  {D}ie {S}truktur des {M}eromorphismenrings. {D}ie {R}iemannsche {V}ermutung.
\newblock {\em J. Reine Angew. Math.}, 175:193--208, 1936.

\bibitem{Hulek}
K.~Hulek, J.~Spandaw, B.~van Geemen, and D.~van Straten.
\newblock The modularity of the {B}arth-{N}ieto quintic and its relatives.
\newblock {\em Adv. Geom.}, 1(3):263--289, 2001.

\bibitem{Ireland}
Kenneth Ireland and Michael Rosen.
\newblock {\em A classical introduction to modern number theory}, volume~84 of
  {\em Graduate Texts in Mathematics}.
\newblock Springer-Verlag, New York, second edition, 1990.

\bibitem{Kaplan}
Nathan Kaplan and Ian Petrow.
\newblock Elliptic curves over a finite field and the trace formula.
\newblock {\em Proc. Lond. Math. Soc. (3)}, 115(6):1317--1372, 2017.

\bibitem{Kutzko}
Philip~C. Kutzko.
\newblock The cyclotomy of finite commutative {P}.{I}.{R}.'s.
\newblock {\em Illinois J. Math.}, 19:1--17, 1975.

\bibitem{Lehmer}
D.~H. Lehmer and Emma Lehmer.
\newblock The cyclotomy of {K}loosterman sums.
\newblock {\em Acta Arith.}, 12:385--407, 1966/67.

\bibitem{Lisonek}
Petr Lison\v{e}k.
\newblock On the connection between {K}loosterman sums and elliptic curves.
\newblock In {\em Sequences and their applications---{SETA} 2008}, volume 5203
  of {\em Lecture Notes in Comput. Sci.}, pages 182--187. Springer, Berlin,
  2008.

\bibitem{Livne}
Ron Livn\'e.
\newblock Motivic orthogonal two-dimensional representations of {${\rm
  Gal}(\overline {\bf Q}/\bold Q)$}.
\newblock {\em Israel J. Math.}, 92(1-3):149--156, 1995.

\bibitem{Manin}
Yu.~I. Manin.
\newblock On cubic congruences to a prime modulus.
\newblock {\em Izv. Akad. Nauk SSSR. Ser. Mat.}, 20:673--678, 1956.

\bibitem{Peters}
C.~Peters, J.~Top, and M.~van~der Vlugt.
\newblock The {H}asse zeta function of a {$K3$} surface related to the number
  of words of weight {$5$} in the {M}elas codes.
\newblock {\em J. Reine Angew. Math.}, 432:151--176, 1992.

\bibitem{Salie}
Hans Sali\'e.
\newblock \"uber die {K}loostermanschen {S}ummen {$S(u,v;q)$}.
\newblock {\em Math. Z.}, 34(1):91--109, 1932.

\bibitem{Weil}
Andr{\'e} Weil.
\newblock On some exponential sums.
\newblock {\em Proc. Nat. Acad. Sci. U. S. A.}, 34:204--207, 1948.

\bibitem{WilliamsPAMS}
Kenneth~S. Williams.
\newblock Evaluation of character sums connected with elliptic curves.
\newblock {\em Proc. Amer. Math. Soc.}, 73(3):291--299, 1979.

\bibitem{XiYi}
Ping Xi and Yuan Yi.
\newblock A note on the moments of {K}loosterman sums.
\newblock {\em Proc. Amer. Math. Soc.}, 141(4):1233--1240, 2013.

\bibitem{Yun2}
Zhiwei Yun.
\newblock Galois representations attached to moments of {K}loosterman sums and
  conjectures of {E}vans.
\newblock {\em Compos. Math.}, 151(1):68--120, 2015.
\newblock Appendix B by Christelle Vincent.

\bibitem{ZhangHan}
Wenpeng Zhang and Di~Han.
\newblock A new identity involving the classical {K}loosterman sums and
  2-dimensional {K}loosterman sums.
\newblock {\em Int. J. Number Theory}, 12(1):111--119, 2016.

\end{thebibliography}
\bibliographystyle{plain}

\end{document}